



\documentclass{amsart}

\usepackage{amsmath,amscd}
\usepackage{epsfig}
\usepackage{amssymb}
\usepackage{amsfonts,flafter,epsf}
\usepackage{newlfont}
\usepackage{color}

\usepackage{IMjournal}

\providecommand\newdefinition\newtheorem

\newtheorem{theorem}{Theorem}%
\newtheorem{lemma}[theorem]{Lemma}

\newdefinition{definition}[theorem]{Definition}
\newdefinition{example}[theorem]{Example}
\newdefinition{remark}[theorem]{Remark}

  \newcommand{\ZSet}{{\mathbb{Z}}}

\newcommand{\rk}{\mathop{\mathrm{rk}}\nolimits}
\newcommand{\crk}{\mathop{\mathrm{corank}}\nolimits}%
\newcommand{\HG}{H_1(G)}

\newcommand{\tors}{\mathop{\mathrm{\kern0pt T}}\nolimits} %

\renewcommand{\phi}{\varphi}

\hyphenation{mani-fold mani-folds com-pac-ti-fiable}

\begin{document}
\title{Co-rank and Betti number of a group}
\author{\|Irina |Gelbukh|, Mexico City}

\rec{December 24, 2014}

\begin{abstract}
We study the maximal ranks of a free and a free abelian quotients of a finitely generated group, called co-rank (inner rank, cut number) and the Betti number, respectively.
We show that any combination of these values within obvious constraints is realized for some finitely presented group, which is important for manifold and foliation topology.
\end{abstract}

\def\sep{, }

\begin{keywords}
     co-rank\sep 
     inner rank\sep 
     fundamental group
\end{keywords}
\begin{subjclass}
                 20E05\sep
                 20F34\sep
                 14F35 
\end{subjclass}

We study the relation between the co-rank $\crk(G)$ of a finitely generated group $G$ and its Betti number $b(G)$. 
These values bound the isotropy index $i(G)$ of $G$: $\crk(G)\le i(G)\le b(G)$~\cite{Dimca-Pa-Su,Gelb10,Meln3}.
These notions have important applications in theory of manifolds as the first non-commutative Betti number $b_1'(M)=\crk(\pi_1(M))$, the first Betti number $b_1(M)=b(\pi_1(M))$, and $h(M)=i(\pi_1(M))$, where $\pi_1(M)$ is the fundamental group of the manifold $M$.
For any $n\ge4$, a group is the fundamental group of a smooth closed connected $n$-manifold iff it is finitely presented.
In theory of $2$- and $3$-manifolds, co-rank of the fundamental group coincides with the cut-number, a generalization of the genus for closed surfaces~\cite{Jaco72,Sikora}.
In the theory of foliations of Morse forms, $b_1'(M)$ and $h(M)$ define the topology of the foliation~\cite{Gelb10,Gelb09}, the form's cohomology class~\cite{Gelb14}, and the types of its singularities~\cite{Gelb11}.

\medskip

For a finitely generated abelian group $G=\ZSet^n\oplus T$, where $T$ is finite, its {\em torsion-free rank}, {\em Pr\"ufer rank}, or {\em (first) Betti number}, is $b(G)=\rk(G/T)=n$. 
The latter term extends to finitely generated groups by $b(G)=b(G^{ab})=\rk(G^{ab}/\tors(G^{ab}))$, where $G^{ab}=G/[G,G]$ is the abelianization and $\tors(\cdot)$, the torsion subgroup. In other words:

\begin{definition}\label{def:betti}
The {\em Betti number} $b(G)$ of a finitely generated group $G$ is the maximum rank of a free abelian quotient group of $G$, i.e., the maximum rank of a free abelian group $A$ such that there exists an epimorphism $\phi:G\twoheadrightarrow A$. 
\end{definition}

The term came from geometric group theory, where $G^{ab}$ is called the first homology group $\HG$.
A non-commutative analog of Betti number can be defined as follows:

\begin{definition}[\textnormal{\cite{Jaco72,Leininger}}]\label{def:corank}
The {\em co-rank\/} $\mathop{\mathrm{corank}}(G)$~\cite{Leininger}, 
{\em inner rank\/} $I\!N(G)$~\cite{Jaco72} or  $Ir(G)$~\cite{Lyndon}, 
or {\em first non-commutative Betti number} $b'_1(G)$~\cite{AL} of a finitely generated group $G$ is the maximum rank of a free quotient group of $G$, i.e., the maximum rank of a free group $F$ such that there exists an epimorphism $\phi:G\twoheadrightarrow F$.
\end{definition}

The notion of co-rank is also in a way dual to that of rank, which is the minimum rank of a free group allowing an epimorphism onto $G$. In contrast to rank, co-rank is algorithmically computable for finitely presented groups~\cite{Makanin, Razborov}. 

For example, $\crk(\ZSet^n)=1$, while $b(\ZSet^n)=n$. For a finite group $G$, $\crk(G)=b(G)=0$; the same holds for $G=\ZSet_2*\ZSet_2*\ZSet_2$, even though it is infinite and contains $F_2$ and thus free subgroups of all ranks up to countable.
Obviously, for any finitely generated group, $\crk(G)\le b(G)\le\rk G$
and $b(G)\ge1$ implies $\crk(G)\ge1$.
In this paper we show that these are the only constraints between these values:

\begin{theorem}\label{theor:group(k,m)}
Let $0\le c,b,r\in\ZSet$. 
Then there exists a 
finitely generated 
group $G$ with $\crk(G)=c$, $b(G)=b$, and $\rk G=r$ iff 
\begin{align*}
c=b=0\textrm{\quad or\quad}1\le c\le b\le r;
\end{align*} 
the group can be chosen to be finitely presented and, if $b=r$, torsion-free.
\end{theorem}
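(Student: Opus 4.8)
The plan is to prove this by explicit construction, building a finitely presented group for each admissible triple $(c,b,r)$ and then verifying the necessity of the stated constraints. The necessity direction is the easy part: the inequalities $\crk(G)\le b(G)\le \rk G$ are already quoted in the excerpt, and the implication $b(G)\ge 1\Rightarrow\crk(G)\ge 1$ is likewise given, so the only thing to check is that $b=0$ forces $c=0$ (contrapositive of the last implication) and that the remaining combinations are exactly $c=b=0$ or $1\le c\le b\le r$. Thus the real work is realizability.

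For the construction I would assemble the group as a free product of standard building blocks, one family contributing to the free (co-rank) part, one to the abelian-but-not-free (Betti) part, and one to the rank surplus. The natural candidates are a free group $F_c$ of rank $c$ to supply co-rank $c$; abelian groups such as $\ZSet^{b-c}$ (or copies of $\ZSet$) to raise the Betti number to $b$ without raising the co-rank, using $\crk(\ZSet^n)=1$ and $b(\ZSet^n)=n$; and finally torsion blocks or additional relations to pad the rank up to $r$ while contributing nothing to $b$ or $c$. A convenient prototype is $G=F_c * \ZSet^{b-c+1} * (\text{torsion part})$, adjusted so that the first-homology computation gives rank $b$, the maximal free quotient has rank $c$, and the minimal generating set has size $r$. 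I would first treat the generic case $1\le c\le b\le r$, then handle $c=b=0$ separately (a finite group, or a suitable perfect/torsion group realizing any prescribed $r$), and isolate the torsion-free subcase $b=r$ by dropping the torsion blocks, in which case a group like $F_c * \ZSet^{b-c}$ (with relations trimming rank exactly to $b=r$) should work.

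The main obstacle is the simultaneous and independent control of all three invariants, and especially pinning down $\rk G$ exactly. Betti number is computable from abelianization and co-rank behaves well under free products (co-rank is additive, $\crk(A*B)=\crk A+\crk B$ when the factors have no free quotient collisions, and at minimum $\crk(A*B)\ge \crk A+\crk B$), so $c$ and $b$ are comparatively tractable. Rank, by contrast, is notoriously delicate: proving that a presentation cannot be generated by fewer elements requires a genuine lower bound, for which I expect the cleanest tool to be the inequality $\rk G\ge b(G)$ together with a careful choice of torsion so that the abelianization's minimal number of generators and the group's minimal number of generators agree. I would therefore design the torsion blocks (e.g.\ cyclic groups $\ZSet_{p}$ with distinct primes, or a $\ZSet^{r-b}$-flavored padding that is killed in homology by relations) so that a deficiency or Euler-characteristic argument forces $\rk G = r$ from below while the chosen generating set gives the matching upper bound.

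I would close by verifying each invariant on the assembled group: compute $G^{ab}$ to read off $b$, exhibit the rank-$c$ free quotient and argue no larger one exists (using additivity of co-rank over the free product decomposition and the vanishing of co-rank on the torsion and surplus factors), and confirm $\rk G=r$ via the matching upper and lower bounds. Finally I would check that all the building blocks are finitely presented so the free product is, and note that in the subcase $b=r$ the torsion padding is unnecessary, leaving a torsion-free group as claimed.
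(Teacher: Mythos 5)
Your overall strategy is exactly the paper's: realize the triple by a free product of free abelian blocks plus torsion padding, and read off the three invariants from additivity over free products. The paper takes $G=\ZSet^{b_1}*\dots*\ZSet^{b_c}*\ZSet_2^{r-b}$ with $\sum b_i=b$, gets $\crk(G)=c$ from the additivity of co-rank under free products (Lyndon--Schupp, Prop.~6.4), $b(G)=b$ from the additivity of the Betti number (proved as a separate lemma), and $\rk G=r$ from the Grushko--Neumann theorem. Two concrete points in your prototype need fixing. First, $F_c*\ZSet^{b-c+1}$ has co-rank $c+1$ when $b\ge c$, because the factor $\ZSet^{b-c+1}$ itself contributes $1$ to the co-rank; you want $F_{c-1}*\ZSet^{b-c+1}$, which is precisely the paper's group with $b_1=\dots=b_{c-1}=1$ and $b_c=b-c+1$. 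The same off-by-one affects your torsion-free case $F_c*\ZSet^{b-c}$. Second, your suggestion of cyclic torsion blocks $\ZSet_p$ with \emph{distinct} primes defeats the abelianization lower bound you propose for the rank: $\ZSet_{p_1}\oplus\dots\oplus\ZSet_{p_k}$ is cyclic, so $G^{ab}$ would then need only $b+1$ generators and would not force $\rk G\ge r$. Either take all torsion factors to be $\ZSet_2$ (then $G^{ab}=\ZSet^b\oplus\ZSet_2^{r-b}$ needs exactly $r$ generators, so your lower bound via the abelianization works; this is a legitimate, slightly more elementary alternative to the paper's route), or invoke Grushko--Neumann directly, which makes rank additive over free products and renders the choice of torsion irrelevant. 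With those two corrections your argument closes in the same way as the paper's.
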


\begin{lemma}\label{lem:*=x=+}
Let $G_1,G_2$ be finitely generated groups. Then for the Betti number of the free product and of the direct product,
\begin{align*}%
b(G_1*G_2)=b(G_1\times G_2)=b(G_1)+b(G_2).
\end{align*}
\end{lemma}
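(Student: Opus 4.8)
The plan is to observe that the Betti number depends only on the abelianization: by definition $b(G)=\rk(G^{ab}/\tors(G^{ab}))$, the torsion-free rank of the finitely generated abelian group $G^{ab}$. Hence it suffices to prove that both $(G_1*G_2)^{ab}$ and $(G_1\times G_2)^{ab}$ are isomorphic to $G_1^{ab}\oplus G_2^{ab}$, and then to read off the torsion-free rank of this direct sum.

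For the direct product I would compute the commutator subgroup directly. Since $[(g_1,g_2),(h_1,h_2)]=([g_1,h_1],[g_2,h_2])$, one gets $[G_1\times G_2,\,G_1\times G_2]=[G_1,G_1]\times[G_2,G_2]$, and therefore $(G_1\times G_2)^{ab}\cong G_1^{ab}\oplus G_2^{ab}$.

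For the free product the cleanest route is the universal property: abelianization is left adjoint to the inclusion of abelian groups into groups, so it sends the coproduct in groups (the free product) to the coproduct in abelian groups (the direct sum), giving $(G_1*G_2)^{ab}\cong G_1^{ab}\oplus G_2^{ab}$. If one prefers to avoid categorical language, I would instead build the isomorphism by hand: the canonical maps $G_i\hookrightarrow G_1*G_2\twoheadrightarrow(G_1*G_2)^{ab}$ assemble into a homomorphism $G_1^{ab}\oplus G_2^{ab}\to(G_1*G_2)^{ab}$, while the retractions $G_1*G_2\twoheadrightarrow G_i$ followed by abelianization induce the inverse map; checking that the two are mutually inverse is routine.

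Finally, since $G_1^{ab}$ and $G_2^{ab}$ are finitely generated, write $G_i^{ab}\cong\ZSet^{b(G_i)}\oplus T_i$ with $T_i$ finite; then $G_1^{ab}\oplus G_2^{ab}\cong\ZSet^{b(G_1)+b(G_2)}\oplus(T_1\oplus T_2)$, so its torsion-free rank is $b(G_1)+b(G_2)$, as claimed. I expect no serious obstacle here: the one step carrying genuine content is the free-product abelianization, and even that is classical; the main thing to get right is stating the two abelianization identities cleanly and then invoking additivity of torsion-free rank over finite direct sums.
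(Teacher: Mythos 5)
Your proof is correct, but it takes a genuinely different route from the paper's. You compute both abelianizations explicitly, $(G_1*G_2)^{ab}\cong(G_1\times G_2)^{ab}\cong G_1^{ab}\oplus G_2^{ab}$ (via the commutator computation for the direct product and the left-adjoint/coproduct argument for the free product), and then read off the torsion-free rank from the structure theorem for finitely generated abelian groups, using the identity $b(G)=\rk(G^{ab}/\tors(G^{ab}))$ that the paper records in its introduction. The paper instead works directly with the definition of $b$ as the maximum rank of a free abelian quotient: it only notes that the two abelianizations coincide (so the free-product case reduces to the direct-product case), then proves $b(G_1\times G_2)\ge b(G_1)+b(G_2)$ by assembling epimorphisms $G_i\twoheadrightarrow\ZSet^{b(G_i)}$ into one on the product, and $b(G_1\times G_2)\le b(G_1)+b(G_2)$ by composing the inclusions of the factors with a maximal epimorphism onto $A=\ZSet^{b(G_1\times G_2)}$ and observing that the two images are free abelian of ranks at most $b(G_1)$ and $b(G_2)$ and together generate $A$. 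Your version is more structural and arguably cleaner, at the cost of invoking the classification of finitely generated abelian groups and functoriality of abelianization; the paper's version stays entirely inside the ``free abelian quotient'' definition and needs only that subgroups of free abelian groups are free abelian and that rank is subadditive over a generating pair of subgroups. Both arguments are complete.
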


\begin{proof}
Obviously, $(G_1*G_2)^{ab}=(G_1\times G_2)^{ab}$.
Denote $G=G_1\times G_2$.
Since epimorphisms $G_i\twoheadrightarrow \ZSet^{b(G_i)}$ onto free abelian groups can be extended to an epimorphism of $G_1\times G_2\twoheadrightarrow\ZSet^{b(G_2)}\times\ZSet^{b(G_2)}=\ZSet^{b(G_2)+b(G_2)}$, we have $b(G)\ge b(G_1)+b(G_2)$.

Let us now show that $b(G)\le b(G_1)+b(G_2)$. Consider the natural homomorphisms $\psi_1:G_1\to G_1\times1\subseteq G$,  $\psi_2:G_1\to 1\times G_2\subseteq G$.
Then $\psi_i$ and an epimorphism onto a free abelian group 
$$
G_i\stackrel{\psi_i}\longrightarrow G=G_1\times G_2\twoheadrightarrow A=\ZSet^{b(G)}
$$
induces a homomorphism $\phi_i:G_i\to A$. 
Since $A_i=\phi_i(G_i)\subseteq A$ are free abelian groups, $\rk A_i\le b(G_i)$.
Since $G=\langle \psi_1(G_1),\psi_2(G_2)\rangle$, we have $A=\langle A_1,A_2\rangle$; in particular, $b(G)=\rk A\le\rk A_1+\rk A_2$.
\end{proof}

\begin{proof}[Proof of Theorem~\ref{theor:group(k,m)}]
For $1\le c\le b\le r$, consider 
$G=\ZSet^{b_1}*\dots*\ZSet^{b_c}*\ZSet_2^{r-b}$
such that 
$\sum_{i=1}^c b_i=b$. 
By~\cite[Proposition 6.4]{Lyndon}, $\crk(G_1* G_2)=\crk(G_1)+\crk(G_2)$, so $\crk(G)=\sum_{i=1}^c\crk(\ZSet^{b_i})=c$.
By Lemma~\ref{lem:*=x=+}, $b(G)=\sum_{i=1}^cb(\ZSet^{b_i})=b$, and by Grushko-Neumann theorem, $\rk G=r$.
\end{proof}

{
\small
\renewcommand\MR[1]{#1} %

\newcommand\reftitle[1]{#1}
\newcommand\refvolume[1]{#1}

}

{
\small
{\em Authors' addresses}:
{\em Irina Gelbukh}, 
Centro de Investigaci\'on en Computación (CIC),
Instituto Polit\'ecnico Nacional (IPN), 
Av. Juan de Dios B\'atiz, 07738, DF, Mexico City,
Mexico,
e-mail:~\texttt{gelbukh@member.ams.org}.

}

\label{lastpage}
\end{document}